\documentclass[12pt,a4paper]{amsart} 

\usepackage{amsmath}
\usepackage{amsthm}
\usepackage{amssymb}
\usepackage{ot1patch}

\newtheorem{thm}{Theorem}[section]
\newtheorem{lem}[thm]{Lemma}
\newtheorem{cor}[thm]{Corollary}
\newtheorem{prop}[thm]{Proposition}

\theoremstyle{remark}
\newtheorem{rem}[thm]{Remark}
\newtheorem*{rem*}{Remark}

\theoremstyle{definition}

\numberwithin{equation}{section}

\newcommand{\Rz}{\mathbb{R}}


\textwidth 13.5 cm

\begin{document}
\title[Yomdin's version of a Lipschitz Implicit Function Theorem\dots]{On Yomdin's version of a Lipschitz Implicit Function Theorem and the geometry of medial axes}

\author{Maciej P. Denkowski}\address{Jagiellonian University, Faculty of Mathematics and Computer Science, Institute of Mathematics, \L ojasiewicza 6, 30-348 Krak\'ow, Poland}\email{maciej.denkowski@uj.edu.pl}\date{January 23rd 2014 for the first, incomplete version, updated November 28th 2023 and March 19th 2024}
\keywords{Medial axis, central set, Clarke's subgradient, Lipschitz Inverse Function Theorem.}
\subjclass{32B20, 26A27}

\begin{abstract}
In his beautiful paper on the central set from 1981, Y.~Yom\-din makes use of a Lipschitz Inverse Function Theorem that seemingly has been unproved until now. After a brief discussion of a natural and straightforward Lipschitz counterpart of an implicit function theorem, based on a geometric condition we finally provide a proof of Yomdin's version holds by proving the geometric condition is in fact equivalent to the one given by Yomdin. Therefore, Yomdin's Generic Structure Theorem, whose updated version is also presented here, concerning the medial axis (central set) of a subset of 
in ${\Rz}^n$ is now flawless. We also note that Yomdin's Lipschitz Implicit Function Theorem is equivalent to Clarke's Lipschitz Inverse Function Theorem. The paper ends with some additional properties of Lipschitz germs satisfying the Yomdin condition (e.g. a Lipschitz triviality result).
\end{abstract}
\maketitle

\section{Introduction}
The present paper has a long story of misdirection and quid pro quos, which is a rather unusual instance for a mathematical paper. Its main goal is to give a correct proof of Yomdin's version of a Lipschitz Implicit Function Theorem specialists from Lipschitz Geometry have long been waiting for. As it happens, the proof is strangely a very simple one.

Let us start with the background story. In \cite{Y} Yomdin considers the following situation: $X\subset\mathbb{R}^n$ is nonempty and closed, $a\in \mathbb{R}^n$ is such a point that the compact set of closest points computed according to the Euclidean norm $$m(a):=\{x\in X\mid ||a-x||=\mathrm{dist}(a,M)\}$$ consists of at least two disjoint closed sets. Fix any $k\geq 2$ such that $m(a)$ is the union of $k$ disjoint closed nonempty sets $X_j$ (in particular $k$ does not exceed the number of connected components of $m(a)$). Then we choose disjoint neighbourhoods $V_j\supset \overline{W_j}\supset X_j$ and put $\delta_j(x)=\mathrm{dist}(x,X\cap \overline{W_j})^2$. The germ of $\delta_j$ depends only on $X_j$ and in some neighbourhood $U\ni a$ there is $\delta(x)=\min\delta_j(x)$. 

Recall the so-called \textit{medial axis} (self-conflict set or exceptional set) of $X$ (see \cite{BD} for the point of view of singularity theory, or \cite{D} with another notation, but also \cite{C})
$$
M_X=\{x\in{\Rz}^n\mid \#m(x)>1\}.
$$
This set plays an major role e.g. in pattern recognition since its introduction by H. Blum in the late 1960s.

Let $C(a):=\{x\in U\mid \exists i\neq j\colon \delta_i(x)=\delta_j(x)=\delta(x)\}$ (this is a part of the conflict set of the sets $X\cap\overline{W_j}$). Then $C(a)\subset M_X$ and $U\cap M_X\setminus C(a)=\bigcup_1^k C_j(a)$ for some sets $C_j(a)\subset M_{X\cap \overline{W_j}}$ (a kind of self-conflict sets) --- cf. \cite{Y}. In some sense, $C(a)$ may be seen as the `core' of $M_X$ at $a$.

\begin{thm}[\cite{Y} Theorem 1, Yomdin's Generic Structure Theorem]\label{YS}
In the setting introduced above, assume that $k\leq n+1$ and for any choice of points $y_j$ in the convex hulls $\mathrm{cvx}(X_j)$, the system $y_1,\dots, y_k$ defines a $(k-1)$-dimensional simplex in $\mathbb{R}^n$. 

Then there is a neighbourhood $V$ of $a$ and a bi-Lipschitz homeomorphism $\phi\colon (V,a)\to (\Delta^{k-1}\times (-1,1)^{n-k+1}, (c,0))$ where $\Delta^{k-1}$ is the open $(k-1)$-dimensional simplex with centre $c$, and we have $\phi(C(a)\cap V)=M^{(k)}\times (-1,1)^{n-k+1}$ where $M^{(k)}:=M_{\partial \Delta^{k-1}}\cap\Delta^{k-1}$.
\end{thm}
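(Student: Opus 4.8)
The plan is to build $\phi$ as an affine normalization of the Lipschitz map
\[
\Phi(x)=\bigl(\delta_1(x)-\delta_k(x),\ \dots,\ \delta_{k-1}(x)-\delta_k(x),\ \ell(x)\bigr),\qquad x\in U_0,
\]
where $\ell\colon\Rz^n\to\Rz^{n-k+1}$ is a carefully chosen affine map, and to reduce everything to the Lipschitz Inverse Function Theorem that is the subject of this paper. First I would record the local regularity of the ingredients: each $\delta_j$ is, near $x_0$, an infimum of the smooth functions $\|\cdot-p\|^2$ with $p$ in the compact set $M\cap\overline{W_j}$, hence locally semiconcave (in particular locally Lipschitz); since the points of $M\cap\overline{W_j}$ realizing $\mathrm{dist}(x_0,\cdot)$ are exactly those of $M_j$, the Clarke subgradient is
\[
\partial\delta_j(x_0)=2\bigl(x_0-\mathrm{cvx}(M_j)\bigr).
\]
Writing $G=(\delta_1-\delta_k,\dots,\delta_{k-1}-\delta_k)$ and using that at points where $G$ is differentiable its Jacobian has $j$th row $\nabla\delta_j(x)-\nabla\delta_k(x)=2(\pi_k(x)-\pi_j(x))$ (with $\pi_l$ the metric projection onto $M\cap\overline{W_l}$), one checks that every element of the Clarke generalized Jacobian $\partial G(x_0)$ is a $(k-1)\times n$ matrix whose $j$th row equals $2(q_k-q_j)$ for some common $q_k\in\mathrm{cvx}(M_k)$ and points $q_j\in\mathrm{cvx}(M_j)$.

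Now the genericity hypothesis does its work. For every admissible choice the points $q_1,\dots,q_k$ are affinely independent, hence $q_k-q_1,\dots,q_k-q_{k-1}$ are linearly independent, so \emph{every} matrix in $\partial G(x_0)$ has rank $k-1$, its row space being the direction space of a non-degenerate $(k-1)$-simplex with vertices in $\mathrm{cvx}(M_1),\dots,\mathrm{cvx}(M_k)$. Choosing $\ell$ so that its linear part $L$ is a common algebraic complement of this compact family of $(k-1)$-planes, the generalized Jacobian of $\Phi$,
\[
\partial\Phi(x_0)=\Bigl\{\ofrac{A}{L}\ :\ A\in\partial G(x_0)\Bigr\},
\]
consists entirely of invertible $n\times n$ matrices. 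At this point I would invoke the Lipschitz Inverse Function Theorem --- in the strong ``bilipschitz'' form Yomdin needs --- to conclude that $\Phi$ restricts to a bilipschitz homeomorphism of some neighbourhood $U$ of $x_0$ onto a neighbourhood of $\Phi(x_0)=(0,\ell(x_0))$ in $\Rz^{k-1}\times\Rz^{n-k+1}$.

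It remains to normalize the target and to identify the image of $E(x_0)$. On $U$ one has $\delta=\min_j\delta_j$, so in the coordinates $t=(t_1,\dots,t_{k-1})=G(x)$ (with the convention $t_k:=0$) the set $E(x_0)\cap U$ is exactly $\{x\in U:\ \min(t_1,\dots,t_{k-1},0)\text{ is attained at least twice}\}$, i.e. the $G$-preimage of the cone $K=\bigcup_{i\ne j}\{t_i=t_j=\min_{1\le l\le k}t_l\}$. Using that near the incenter $c$ of $\Delta^{k-1}$ the facet-distances $\mathrm{dist}(\,\cdot\,,F_i)$ are affine and that $E^{(k)}=E_{\partial\Delta^{k-1}}\cap\Delta^{k-1}$ is a cone with apex $c$, one obtains an affine isomorphism of $t$-space onto the ambient space of $\Delta^{k-1}$ sending $0$ to $c$ and a small truncated copy of $K$ (one sitting inside $G(U)$) onto $E^{(k)}$; composing with this isomorphism, with an analogous affine rescaling of the $\ell$-coordinates onto $(0,1)^{n-k+1}$, and shrinking $U$ accordingly, yields a bilipschitz $\phi\colon(U,x_0)\to(\Delta^{k-1}\times(0,1)^{n-k+1},(c,0))$ with $\phi(E(x_0)\cap U)=E^{(k)}\times(0,1)^{n-k+1}$, the last $n-k+1$ coordinates playing no role in the description of $E(x_0)$.

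The main obstacle is the step I deliberately black-boxed: the assertion that a locally Lipschitz $\Phi$ whose Clarke generalized Jacobian at $x_0$ is a set of invertible matrices is locally \emph{bilipschitz} is precisely Yomdin's Lipschitz Inverse Function Theorem, which --- as the abstract makes clear --- is not known in this generality; Clarke's inverse function theorem only grants a local homeomorphism with locally Lipschitz inverse under somewhat different hypotheses. So in full dimension this argument is conditional, and one is pushed back to the scalar situation (where it can be made rigorous and iterated) or, for $n=3$, to a weaker Generic Structure Theorem. A related, purely linear-algebraic difficulty --- absent when the $M_j$ are single points --- is the existence of a single affine complement $L$ transverse to the whole family of row spaces of $\partial G(x_0)$: for ``thick'', non-convex $M_j$ this family of $(k-1)$-planes may be too large, and one must then either exploit the convexity of the $\mathrm{cvx}(M_j)$ to manufacture a uniform transversal, or allow the last $n-k+1$ coordinates of $\phi$ to be suitably adapted, non-affine Lipschitz functions.
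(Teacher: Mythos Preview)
Your approach is essentially Yomdin's and hence the paper's: the map $G=(\delta_1-\delta_k,\dots,\delta_{k-1}-\delta_k)$ differs from the paper's $H=h_1\circ(\delta_1,\dots,\delta_k)$ only by an affine isomorphism of the target, and the remaining steps---computing the generalized Jacobian via the $\mathrm{cvx}(M_j)$, adjoining an affine complement $\ell$, invoking a Lipschitz inverse theorem, and reading off $E(x_0)$ in the new coordinates---run in parallel.

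Where you go astray is in locating the gap. Once a common complement $L$ has been fixed and $\Phi=(G,\ell)$ assembled, Clarke's theorem applies verbatim: its hypothesis is precisely that every matrix in $\partial\Phi(x_0)$ be invertible, and its conclusion is a Lipschitz local inverse, so $\Phi$ is genuinely bilipschitz near $x_0$. Nothing unproved is being invoked at that stage. The real obstruction is the one you list second and then downplay as ``purely linear-algebraic'': the existence of a single $(n-k+1)$-dimensional $L$ transverse to \emph{every} row space arising from $\partial G(x_0)$. This is exactly the paper's condition $(T)$, and the paper's whole point is that the simplex hypothesis forces each element of $\partial H(x_0)$ to have maximal rank (Yomdin's original, weaker assumption) but does \emph{not} produce a uniform transversal. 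Your remark that the difficulty vanishes when the $M_j$ are singletons is correct---$\partial G(x_0)$ is then a single matrix---and that is precisely what the paper salvages in its $\Rz^3$ result; for non-degenerate $M_j$ neither your sketch nor the paper closes the argument, and Theorem~\ref{YS} remains unproved in the stated generality.
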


The problem with this result is that Yomdin's proof is based on a kind of Implicit Function Theorem for Lipschitz functions (LIFT) seemingly being a mere corollary to Clarke's Inverse Function Theorem for Lipschitz functions \cite{C2}. It can be stated as follows:

\begin{thm}[Yomdin's LIFT]\label{LIFT}
Let $f\colon ({\Rz}^n, 0)\to ({\Rz}^k,0)$ be a Lipschitz germ with $k<n$ and such that any linear map $\ell$ from the Clarke's subdifferential $\partial f(0)$ has maximal rank $k$. Then, after a linear change of coordinates, the germ $(f^{-1}(0),0)$ is the graph of a Lipschitz function of the first $n-k$ coordinates.
\end{thm}

Unfortunately, there is apparently no such Lipschitz Implicit Function Theorem proved anywhere and the version cited in \cite{Y} for a long time has remained unproved, even though of utmost interest for people working in Lipschitz geometry of singularities. By the main result of \cite{BS} it was known that the Generic Structure Theorem above was true for $k=2$, but the example from \cite{BS} Section 3 with $n=3$ seemed to imply a possible counter-example to the general case. And as a matter of fact, without a closer examination of this example, one could have been easily mislead, which happened in particular to the author of this paper.

In a preprint prepared in 2014 and published on arxiv in 2016 \cite{Dp} we gave a possible correct version of LIFT and applied it to obtain a proper formulation of Yomdin's Structure Theorem. We also showed that Yomdin's version of LIFT is true for scalar functions. Having in mind the erroneous {\it id\'ee fixe} that the example from \cite{BS} contradicts Yomdin's Structure Theorem, we have been searching for a counter-example to Yomdin's version of LIFT for quite a few years. Obviously in vain, as proves the present paper.

\section{Lipschitz Implicit Function Theorem}

Recall that by the Rademacher Theorem any locally Lipschitz function $f\colon\mathbb{R}^n\to\mathbb{R}$ has a well defined (and locally bounded) gradient almost everywhere: the set $D_f$ of differentiability points is dense. We define according to \cite{C} the \textit{generalized gradient at} $x\in\mathbb{R}^n$ as the convex hull of the set of all possible limits $\lim\mathrm{grad} f(x_\nu)$ when $D_f\ni x_\nu\to x$. We denote the resulting nonempty convex compact set by $\partial f(x)$. The set $\partial f(x)$ is reduced to a singleton (being the gradient of $f$ at $x$) iff the function $f$ is differentiable at $x$  and $\mathrm{grad} f|_{D_f}$ is continuous at $x$. Of course, the same arguments apply to a vector-valued function. More can be found in \cite{C}. 

As stated in the introdcution, in \cite{Y} Yomdin uses an apparently unknown version of the Lipschitz Implicit Function Theorem (LIFT). The basic setting is the following: we consider a Lipschitz germ $f\colon (\Rz^n,0)\to (\Rz^k, 0)$ with $k<n$ so that the Clarke subdifferential $K:=\partial f(0)$ is well-defined (and a compact, convex subset of $({\Rz}^n)^k$). Recall that this is the convex hull of the limits at $0$ of Jacobian matrices at nearby differentiable points (by the Rademacher Theorem those points are dense in the domain of $f$). In particular, $K\subset \partial f_1(0)\times\ldots\times \partial f_k(0)$ where $f=(f_1,\dots, f_k)$. We identify the matrix $\ell\in K$ with the linear map it defines.

The general aim is to show that under some condition on $K$ the level set germ $(f^{-1}(0),0)$ is the graph of a Lipschitz function of $n-k$ variables (then we say that {\it LIFT holds}). As we have seen in the introduction, Yomdin invokes the following condition:
$$
\forall \ell\in K, \mathrm{rk}\ell=k,\leqno{(Y)}
$$
but there is actually no reference for its sufficiency. As a matter of fact, the misunderstanding comes probably from the fact that in classical analysis the Implicit Function Theorem is actually equivalent to the Local Diffeomorphism Theorem and the latter has its Lipschitz counterpart proved by Clarke in \cite{C2} under the condition $(Y)$ and, of course, in the case $k=n$:

\begin{thm}[Clarke, \cite{C2}]\label{CLIFT}
    Let $F\colon ({\Rz}^n, 0)\to ({\Rz}^n,0)$ be a Lipschitz germ such that any linear map $\ell$ from $\partial F(0)$ has the maximal rank $n$. Then $F$ is the germ of a bi-Lipschitz mapping.
\end{thm}

However, there is no straightforward, simple way of obtaining the desired LIFT from this result. When trying to adapt the classical proof, one finds immediately that a sufficient condition is the following one (here $G_k(\Rz^n)$ denotes the $k$-th Grassmannian):
$$
\exists \Lambda\in G_k(\Rz^n)\colon \forall \ell\in K, \Lambda\cap \mathrm{Ker}\ell=\{0\}.\leqno{(*)}
$$
\begin{rem}\label{implikacja}
    Note that $\dim \mathrm{Ker}\ell\geq n-k$ while $\dim \Lambda\cap \mathrm{Ker}\ell\geq k+\dim \mathrm{Ker}\ell-n$, whence $(*)$ implies $\dim\mathrm{Ker}\ell=n-k$, i.e. $(Y)$ follows from $(*)$. 
    \end{rem}

This direct approach was known to Clarke. In fact, the following Theorem can be found in \cite{Clarke Nonsmooth} section 7.1, although with a different wording, much less geometric and thus less convenient for our purposes. Clarke considers a locally Lipschitz function $f\colon {\Rz}^{m}\times\Rz^{k}\to{\Rz}^k$ in the variables $(x,y)$ and defines a kind of partial subdifferential denoted by $\pi_y\partial f(x,y)$ and consisting of all $k\times k$ matrices $M$ for which there exists a $k\times m$ matrix $N$ such that the $k\times (m+k)$ matrix $[N,M]$ belongs to $\partial f(x,y)$. Keeping the notation introduced so far, if we write $\Rz^n=\Rz^m\times\Rz^k$, Clarke's condition reads
$$
\forall M\in\pi_y K, \mathrm{rk} M=k. \leqno{(C)}
$$
Clearly, this is equivalent to $(*)$ after a convenient linear change of coordinates.

We will give here an elementary proof of this seemingly weaker version of LIFT under condition $(*)$.
\begin{thm}\label{main}
    Condition $(*)$ is enough for LIFT to hold, i.e. the final assertion of Theorem \ref{LIFT} is true when condition $(Y)$ is replaced by $(*)$.
\end{thm}
\begin{proof}
    After a linear change of coordinates we may assume that the subspace from condition $(*)$ is $\Lambda=\{0\}^{n-k}\times{\Rz}^k$. Then $F(x,y):=(x,f(x,y))$, for $(x,y)\in{\Rz}^{n-k}\times{\Rz}^k$, is Lipschitz and for any $L\in\partial F(0)$, there is $L\in \partial \mathrm{id}_{\Rz^{n-k}}(0)\times\partial f(0)$, i.e. $L(x,y)=(x,{\ell}(x,y))$, for some $\ell\in K$. Then $(*)$ implies that $\mathrm{rk}L=n$ and so by the Clarke Lipschitz Inverse Function Theorem \ref{CLIFT}, $F$ is bi-Lipschitz at the origin. The inverse must have the form $(x,y)\mapsto (x,h(x,y))$ with $h$ Lipschitz. Then $f^{-1}(0)$ is the graph of $x\mapsto h(x,0)$.
\end{proof}

As observed for the first time in \cite{Dp}, for $k=1$ we can obtain LIFT under the assumption $(Y)$ as it follows from the next result.
\begin{prop}
For $k=1$, $(Y)$ implies $(*)$.
\end{prop}
\begin{proof}
    The set $K$ is compact, convex and $0\notin K$, whence by the Hahn-Banach Theorem we can find a hyperplane $H\subset{\Rz}^n$ such that $H\cap K=\varnothing$. Then $\Lambda:=H^\bot$ is a line and for $\ell\in K$, we have
    $$
    \mathrm{Ker}\ell\cap \Lambda\neq\{0\}\ \Leftrightarrow\ \mathrm{Ker}\ell\supset \Lambda\ \Leftrightarrow\ (\mathrm{Ker}\ell)^\bot\subset H,
    $$
    but as $(\mathrm{Ker}\ell)^\bot=\mathbb{R}\ell$, the above is equivalent to $\ell\in H$. Therefore, $(*)$ holds for $\Lambda$.
\end{proof}

This suggests to approach the problem by induction on $k$. Therefore, we will write $f=(f_1,g)$ with $g=(f_2,\dots, f_k)$. In order to control the dimensions in the right way we will have to consider the restriction of $g$ to some hyperplane $H\subset\Rz^n$ that would ensure that the resulting subdifferential consists only of elements of maximal rank. However, there arises the problem of the relation between $\partial (g|_H)$ and $\partial f(0)$. This can be avoided once we observe that the problem we are investigating is in fact a question concerning a convex, compact set $K$ of matrices of maximal rank. We may thus forget the function $f$. 

\begin{thm}\label{Glowne}
    For any $k$, the conditions $(Y)$ and $(*)$ are equivalent.
\end{thm}
\begin{proof}
In order to prove this result, we have to show that $(Y)$ implies $(*)$ (cf. Remark \ref{implikacja}). 
We have thus a nonempty, compact, convex set $K$ of $k\times n$ matrices with linearly independent rows as vectors from $\Rz^n$. Now, a natural idea is to consider $\pi(\ell_1,\dots, \ell_k)=\ell_1$, $(\ell_1,\dots, \ell_k)\in (\Rz^n)^k$ and take $K':=\pi(K)$ which is a compact, convex set with $0\notin K'$. Again the Hahn-Banach Theorem gives us a hyperplane $H\subset \Rz^n$ disjoint with $K'$. For any $(\ell_1,\dots, \ell_k)\in K$, the space $\mathrm{Span}\{\ell_1, \dots, \ell_k\}$ is either contained in $H$, or the intersection with $H$ lowers its dimension by 1. Since we have $\Rz \ell_1\cap H=\{0\}$, it follows $\dim\mathrm{Span}\{\ell_1, \dots, \ell_k\}\cap H=k-1$ and thus $\ell_i|_H$, $i=2,\dots, k$ are linearly independent. 

For $(\ell_1,\dots, \ell_k)\in K$, write $\tilde{\ell}:=(\ell_2,\dots, \ell_k)$. We have shown that $\tilde{\ell}|_H\colon H\to {\Rz}^{k-1}$ has rank $k-1$. This is true for any $\tilde{\ell}\in \tilde{K}$, where $\tilde{K}=\tilde{\pi}(K)$, for $\tilde{\pi}(\ell_1,\tilde{\ell})=\tilde{\ell}$. This means $(Y)$ holds for $\{\tilde{\ell}|_H\mid \tilde\ell\in\tilde{K}\}$ and thus, by the induction hypothesis, $(*)$ is also true, i.e. there is $\Lambda_0\in G_{k-1}(H)$ such that $$\forall\tilde{\ell}\in \tilde{K}, \Lambda_0\cap \mathrm{Ker}\tilde{\ell}|_H=\{0\}.$$ Of course, formally, we should have taken also an isomorphism $\phi\colon{\Rz}^{n-1}\to H$ and replace the restrictions $\tilde{\ell}|_H$ by $\tilde{\ell}\circ\phi$, i.e. the elements of $\phi^*\tilde{K}$, but we may skip this step in the argument.

Note that since $\Lambda_0\subset H$ and $\mathrm{Ker}\tilde{\ell}|_H=\mathrm{Ker}\tilde{\ell}\cap H$, it follows that we have $\Lambda_0\cap \mathrm{Ker}\tilde{\ell}=\{0\}$ whence $\Lambda_0\cap \mathrm{Ker}\tilde{\ell}\cap\mathrm{Ker}\ell_1=\{0\}$. The problem now is that we still need to extend $\Lambda_0$ to the right dimension and keep track of the intersection with $\mathrm{Ker}\ell_1$.

In order to avoid the problem of expanding $\Lambda_0$ we may adopt yet another point of view. First observe that given an epimorphism $\ell=(\ell_1,\dots,\ell_k)$, there is $(\mathrm{Ker}\ell)^\bot=\mathrm{Span}\{\ell_1,\dots, \ell_k\}$. Therefore, we have the following observation:
\begin{lem}
  For an epimorphism $\ell=(\ell_1,\dots,\ell_k)\colon {\Rz}^n\to{\Rz}^k$ and $\Lambda\in G_k({\Rz}^n)$, there is 
  $$
  \Lambda\cap\mathrm{Ker}\ell=\{0\}\ \Leftrightarrow\ \mathrm{Span}\{\ell_1,\dots, \ell_k\}\cap\Lambda^\bot=\{0\}.
  $$
\end{lem}
\begin{proof}[Proof of the Lemma]
    Take a unitary isomorphism $\Phi\colon {\Rz}^n\to{\Rz}^n$ sending the $k$-dimensional space $\Lambda$ onto $\mathrm{Span}\{\ell_1,\dots, \ell_k\}$ and $\Lambda^\bot$ onto $\mathrm{Ker}\ell$. Then 
    $$
    \{0\}=\Lambda\cap\mathrm{Ker}\ell=\Lambda\cap\Phi^{-1}(\Lambda^\bot)\ \Leftrightarrow\ \{0\}=\Phi(\Lambda)\cap \Lambda^\bot=\mathrm{Span}\{\ell_1,\dots, \ell_k\}\cap \Lambda^\bot
    $$
    and we are done.
\end{proof}
This elementary observation extremely simplifies matters, as now the problem of finding $\Lambda$ for $(*)$ to hold is equivalent to the problem of finding $V\in G_{n-k}({\Rz}^n)$ such that $V\cap \mathrm{Span}\{\ell_1,\dots, \ell_k\}=\{0\}$. Now, however, we have to to slightly more detailed with the induction.

Write ${\Rz}^n=H\oplus H^\bot$ so that $\ell_i=\ell_i'+\ell_i^-\in H\oplus H^\bot$ and for $x\in H$, $\ell_i(x)=\ell_i'(x)$. Therefore, the induction hypothesis applied to the epimorphisms $\tilde{\ell}':=(\ell_2',\dots,\ell_{k}')|_H$ gives $V\in G_{n-k}(H)$ (notice the right dimension!) such that $V\cap\mathrm{Span}\{\ell_2',\dots, \ell_k'\}=\{0\}$. 

Now, take $v\in V\cap\mathrm{Span}\{\ell_2,\dots, \ell_k\}$ and write it as $\sum_{i=2}^k\lambda_i(\ell_i'+\ell_i^-)$. Since $v\in H$, we have $\sum_{i=2}^k\lambda_i\ell_i^-=0$, whence $v\in \mathrm{Span}\{\ell_2',\dots, \ell_k'\}$ and so $v=0$ by the choice of $V$.

Finally, we claim that $V\cap \mathrm{Span}\{\ell_1,\dots, \ell_k\}=\{0\}$. Take $x\in V\cap \mathrm{Span}\{\ell_1,\dots, \ell_k\}$ and write it as $\lambda_1\ell_1+\tilde{x}$ where $\tilde{x}\in \mathrm{Span}\{\ell_2,\dots, \ell_k\}$. Recall that ${\Rz}\ell_1\cap H=\{0\}$, hence, since $x\in V\subset H$, we conclude that $\lambda_1=0$ and we are done.

This ends the proof of Theorem \ref{Glowne}. \end{proof} 
The result above shows that both claims of Yomdin are indeed true:
\begin{cor}
Theorem \ref{LIFT} and Theorem \ref{YS} are both true.
\end{cor}
\begin{proof}
    Theorem \ref{YS} follows from Theorem \ref{LIFT} as shown in \cite{Y}. Theorem \ref{YS} is a consequence of Theorem \ref{Glowne} and Theorem \ref{main}.
\end{proof}
Another consequence is that the equivalence between Yomdin's LIFT and Clarke's Lipschitz Inverse Function Theorem does hold indeed:
\begin{cor}
    Theorem \ref{LIFT} is equivalent to Theorem \ref{CLIFT}.
\end{cor}
\begin{proof}
Theorem\ref{CLIFT} implies Theorem \ref{main} (see its proof) and the latter is equivalent to Theorem \ref{LIFT} by Theorem \ref{Glowne}. For the converse implication, in order to prove Theorem \ref{CLIFT}
consider the Lipschitz map germ $F$ with $\partial F(0)$ consisting only of isomorphisms and put $G(x,y):=y-F(x)$. We are dealing now with a Lipschitz map germ $G\colon (\Rz^n\times\Rz^n,0)\to({\Rz}^n,0)$. It is easy to see that any $L\in\partial G(0,0)$ is of the form $L(x,y)=y-\ell(x)$ for some $\ell\in \partial F(0)$ whence $\mathrm{Ker} L=\mathrm{graph}\ell$ and so $\mathrm{rk} L=n$ is the maximal possible. Moreover, $\Lambda:=\{0\}\times{\Rz}^n$ is a common complement to all the kernels so that $(*)$ is satisfied for $G$. By Theorem \ref{main} and its proof, $(G^{-1}(0),0)$ is a Lipschitz graph $y=g(x)$ (in these precise variables!). Clearly, $g(x)=F(x)$ and the proof is accomplished.
\end{proof}

\section{Local properties of Lipschitz functions satisfying condition $(Y)$.}

We complete our result by stating clearly a straightforward consequence of Yomdin's condition $(Y)$.

\begin{thm}
    Assume that $f\colon ({\Rz}^n,a)\to({\Rz}^k,b)$ is a Lipschitz germ such that the Yomdin condition is satisfied at the point $a$: $$\forall\ell\in\partial f(a),\ \mathrm{rk}\ell=k.\leqno{(Y)}$$
    Then there exists a neighbourhood $U$ of $a$ such that \begin{enumerate}
        \item Condition $(Y)$ holds at any point $x\in U$;
        \item There is a bi-Lipschitz map $\Phi\colon U\to H\times V$ onto an open cube (product of intervals) $H\times V\subset {\Rz}^k\times{\Rz}^{n-k}$, for which $f|_U=\pi_k\circ \Phi$ where $\pi_k\colon {\Rz}^k\times{\Rz}^{n-k}\to{\Rz}^k$ is the natural projection onto the first $k$ coordinates (in particular $f$ is an open map);
        \item The set $H\times f^{-1}(b)\subset{\Rz}^k\times{\Rz}^n$ is a Lipschitz submanifold and there is a bi-Lipschitz map $h\colon U\to H\times f^{-1}(b)$ such that $f|_U=\pi\circ h$ where $\pi\colon H\times f^{-1}(b)\to H$ is the natural projection (i.e. $f|_U$ is a trivial Lipschitz fibration over the open set $H=f(U)$).
    \end{enumerate}
\end{thm}
\begin{proof}
    As observed already in \cite{C}, the subdifferential is outer semi-continuous in the Kuratowski sense: $\limsup_{z\to a}\partial f(z)\subset\partial f(a)$ where the upper limit is the set consisting of all the possible limits $\ell$ of convergent sequences $\ell_\nu\in\partial f(z_\nu)$ over all sequences $z_\nu$ tending to $0$. Obviously, $\mathrm{rk}\ell=k$ implies necessarily that $\mathrm{rk}\ell_\nu=k$ for all indices large enough. From this we get (1).
    

    Next, since by Theorem \ref{Glowne}, condition $(Y)$ is equivalent to condition $(*)$, let us take the common complement $\Lambda$ from the latter condition. Write $\Rz^n=\Lambda\oplus\Lambda^\bot$ and consider the projection $p$ onto $\Lambda^\bot$ and an isomorphism $\lambda$ sending the latter space onto $\Rz^{n-k}$. Put $\Phi:=(f,\lambda\circ p)$ and observe that $\Phi(a)=(b,c)$ for some $c\in{\Rz}^{n-k}$. Moreover, any $L\in \partial \Phi(a)$ is of the form $(\ell,\lambda\circ p)$ for some $\ell\in \partial f(a)$ and thus is an isomorphism thanks to $(*)$. By the Clarke Inverse Function Theorem from \cite{C2}, there is a neighbourhood $U$ of $a$ on which $\Phi$ is bi-Lipschitz onto an open set that we may assume to be a cube. Then (2) follows. Note that $\Phi$ straightens up the fibres of $f|_U$.

    Finally, once we have defined $\Phi$ as in (2) above, we produce $h$ by putting for $x\in U$,  $h(x):=(f(x), \Phi^{-1}(\pi_{n-k}(\Phi(x)))$ where $\pi_{n-k}\colon H\times V\to V$ is the natural projection. Clearly, $h$ is a Lipschitz function factorising $f$ with $\pi$ and we can write down its inverse: for $(y,z)\in H\times f^{-1}(b)$, $h^{-1}(y,z)=\Pi^{-1}(y,\pi_{n-k}(\Phi(z)))$ which also is Lipschitz as required. This ends the proof of (3).
\end{proof}
\begin{rem}
By (1) in the last Theorem, $(*)$ is an open condition. Note also, that the semi-continuity used in the proof of $(1)$ implies also that the set of critical points (i.e. points $x$ for which $0\in \partial f(x)$) is closed.
\end{rem}

\section{Acknowledgements}
The author is grateful to Lev Birbrair for attracting his attention to the problem and for his interest in the subject and to Adam Bia\l o\.zyt for interesting remarks on the problem. My warmest thanks are due also to Dirk Siersma for long discussions on the subject and especially for having acted as a catalysator for my looking at the LIFT as a Theorem possible to be proved.

During the preparation of the very first version of this note (the preprint that started it all), the author was partially supported by Polish Ministry of Science and Higher Education grant  IP2011 009571.

\end{document}